\newtheorem{thm}[equation]{Theorem}
\newtheorem{cor}[equation]{Corollary}
\newtheorem{lem}[equation]{Lemma}
\newtheorem{proposition}[equation]{Proposition}
\theoremstyle{definition}
\newtheorem{pgraph}[equation]{{}}
\numberwithin{equation}{section}
\newcommand{\heart}{{\rm heart}}
\newcommand{\cL}{{\mathcal{L}}}
\newcommand{\cR}{{\mathcal{R}}}
\newcommand{\cI}{{\mathcal{I}}}
\newcommand{\soc}{{\rm soc}}
\newcommand{\lann}{{\rm lann}}
\newcommand{\rann}{{\rm rann}}
\newcommand{\leftann}{{\rm LeftAnn}}
\begin{document}

\title[Maximal zero product subrings]
{Maximal zero product subrings and inner ideals of simple rings}

\author[A. Baranov]{Alexander Baranov}
\address{Department of Mathematics, University of Leicester, Leicester LE1 7RH, UK}
\email{ab155@le.ac.uk}
\thanks{Supported by University of Leicester}

\author[A. Fern\'andez L\'opez]{Antonio Fern\'andez L\'opez}
\address{Departamento de \'{a}lgebra, Geometr\'{\i}a y Topolog\'{\i}a,
Universidad de M\'{a}laga, 29071, M\'{a}laga, Spain}
 \email{emalfer@uma.es}
\thanks{Supported by the Spanish MEC and
Fondos FEDER, MTM2014-52470-P}




\subjclass[2010]{16D30, 17B60}

\begin{abstract}
Let $Q$ be a (not necessarily unital) simple ring or algebra. A nonempty
subset $S$ of $Q$ is said to have {\em zero product} if $S^2 =0$.
We classify all maximal zero product subsets of $Q$ by proving that
the map $\cR  \mapsto \cR  \cap \leftann(\cR )$ is a
bijection from the set of all proper nonzero annihilator right
ideals of $Q$ onto the set of all maximal zero product subsets of
$Q$.  We also describe the relationship between the maximal zero product
subsets of $Q$ and the maximal inner ideals of its
associated Lie algebra.
\end{abstract}

\maketitle

\section{Introduction}

Let $Q$ be a (not necessarily unital) associative ring or algebra. A nonempty
subset $S$ of $Q$ is said to have {\em zero product} if $S^2 =0$.
By Zorn's Lemma, any zero product subset is contained in a maximal
one, which is obviously a zero product subring. Note also that $0$
is the unique maximal zero product subset of a ring $Q$ if and
only if $Q$ has no nonzero nilpotent elements.

In this paper we describe the maximal zero product subsets of a
prime ring $Q$ with nonzero heart, in particular, of a simple ring,
by proving that the map $\cR  \mapsto \cR  \cap \leftann(\cR )$ is a
bijection from the set of all proper nonzero annihilator right
ideals of $Q$ onto the set of all maximal zero product subsets of
$Q$. In particular, if $Q$ is a simple unital Baer ring (e.g. a
simple Artinian ring), all maximal zero product subsets of $Q$ are of the
form $eQ(1-e)$, where $e$ is a nontrivial idempotent of $Q$.
Moreover, if $e_1$ and $e_2$ are idempotents of $Q$ then 
$e_1Q(1-e_1)= e_2Q(1-e_2)$ if and only if $e_1e_2=e_2$ and $e_2e_1=e_1$ (equivalently, $e_1Q=e_2Q$).

In the case when $Q$ is a simple ring coinciding with its socle,
we classify the maximal zero product subsets of $Q$ in terms of the associated geometry.

Finally, we describe the relationship between the maximal zero product
subsets of a simple ring and the inner ideal structure of its
associated Lie algebra.

For simplicity of exposition, all results in the paper are stated for rings, but it is easy to see that they also hold for algebras over a field.

\section{Preliminaries and notation}
Throughout the paper, $Q$ is a (not necessarily unital)
associative  ring (or algebra); $\cL$ denotes a left and $\cR$ a right
ideal of $Q$; ${\cI}_r(Q)$ and ${\cI}_l(Q)$ are the lattices of
all right and left ideals of $Q$, respectively. By an ideal we
mean a two-sided ideal.

\begin{pgraph}\label{anideal}
For a nonempty subset $S$ of $Q$ we denote by
$$
\lann(S)=\leftann(S): =\{ a \in Q : aS  =0\}
$$
the left annihilator of $S$. Note that $\lann(S)$ is a left ideal
of $Q$ (an ideal if $S$ is a left ideal).
A left ideal
$\cL$ is said to be an {\em annihilator left ideal} if $\cL=\lann(S)$ for
some nonempty subset $S$ of $Q$. Similarly, one defines the right
annihilator $\rann(S): =\{ a \in Q : Sa =0\}$, which is called an {\em
annihilator right ideal}. Note that $\lann(S)=\lann(T)$
where $T=S+SQ$ is the right ideal of $Q$ generated by $S$. Similarly,
$\rann(S)=\rann(S+QS)$. 
\end{pgraph}

\begin{pgraph}
A ring $Q$ is said to be {\em semiprime} if $I^2=0$ implies $I=0$
for any ideal $I$ of $Q$; equivalently, $aQa=0$ implies $a=0$ for
every $a \in Q$. If $Q$ is semiprime, then $\lann(I) =\rann(I)$
and $I \cap \rann(I)=0$ for any ideal $I$ of $Q$.
\end{pgraph}

\begin{pgraph} \label{prime}
A ring $Q$ is said to be {\em prime} if $IJ=0$ implies $I=0$ or
$J=0$ for $I, J$ ideals of $Q$. For a ring $Q$ the following
conditions are equivalent:

\begin{itemize}
\item[(i)] $Q$ is prime.

\item[(ii)] $\lann(I)=0$  for any nonzero  ideal $I$ of $Q$.

\item[(iii)] $aIb=0$ implies $a=0$ or $b=0$, for any nonzero ideal
$I$ of $Q$ and any $a, b$ in $Q$.
\end{itemize}
\end{pgraph}

\begin{pgraph}\label{hh} Let $Q$ be a ring. The {\em heart} of $Q$, denoted
by $\heart(Q)$, is defined as the intersection of all nonzero
ideals of $Q$. 
Clearly, if $Q$ is simple then $\heart(Q)=Q$. 
If $Q$ has nonzero heart, then $\heart(Q)$ is a
minimal ideal. Moreover, a prime ring has nonzero heart if and only
if it contains a minimal ideal.  If $Q$ is prime with
nonzero socle $\soc(Q)$ (the sum of all minimal left ideals), then by 
\cite[Theorem III.3.1]{J}, $\soc(Q)$ is a simple ideal of $Q$, contained in any nonzero ideal of $Q$, so 
$\heart(Q) =\soc(Q)$. 
\end{pgraph}

\section{Orthogonal pairs of one-sided ideals}

\begin{pgraph} \label{gc} \
We have a Galois connection between the lattice
${\cI}_r(Q)$ of all right ideals of $Q$ and the lattice
${\cI}_l(Q)$ of all left ideals of $Q$ given by
${\cR} \mapsto \lann(\cR )$ and $\cL  \mapsto \rann({\cL})$, that is,

\begin{itemize}
\item[(i)] $\cL_1 \subseteq \cL_2 \Rightarrow
\rann(\cL_2) \subseteq \rann(\cL_1)$ \ and \
$\cR_1 \subseteq \cR_2 \Rightarrow
\lann(\cR_2) \subseteq \lann(\cR_1)$,

\item[(ii)] $\cL  \subseteq \lann(\rann(\cL ))$ \
and \ $\cR  \subseteq \rann(\lann(\cR ))$,
\end{itemize}
for all $\cL , \cL_1, \cL_2 \in
{\cI}_l(Q)$ and $\cR , \cR_1, {\cR}_2 \in {\cI}_r(Q)$.

\medskip
Denote by $\overline\cL := \lann(\rann(\cL ))$ and
$\overline\cR := \rann(\lann(\cR ))$ the
corresponding {\em closures} of $\cL$ and $\cR$. 
We say that $\cL$ (resp. $\cR$) is {\em closed} if  
$\cL=\overline\cL$ (resp. $\cR=\overline\cR$).
It follows from (i) and (ii) that
$$
\rann(\cL) \subseteq \overline{\rann(\cL )}=\rann(\lann(\rann(\cL )))=\rann(\overline{\cL})\subseteq \rann(\cL )
$$
and similarly for $\lann(\cR )$. Therefore we have
\begin{itemize}
\item[(iii)] $\rann(\cL ) = \overline{\rann(\cL )}
 = \rann(\overline\cL )$,
\item[(iv)] $\lann(\cR ) = \overline{\lann(\cR )}
 = \lann(\overline\cR )$.
\end{itemize}
In particular, 
\begin{itemize}
\item[(v)] 
a right (resp. left) ideal is closed if
and only if it is an annihilator right (resp. left) ideal.
\end{itemize}
\end{pgraph}

\begin{pgraph}
By an {\em orthogonal pair} of $Q$ we mean  a pair $({\cR}, \ \cL
)$, where $\cR$ is a nonzero right and $\cL$ is a nonzero left
ideals of $Q$ such that ${\cL}\cR  =0$.
\end{pgraph}

\begin{lem} \label{maxorth1} For an orthogonal pair
$(\cR , \ \cL)$ the following conditions
are equivalent:

\begin{itemize}
\item[(i)] $\cR  = \overline\cR $ and ${\cL} = \lann(\cR )$,

 \item[(ii)] $\cR  = \rann({\mathcal
L})$ and $\cL  = \lann(\cR )$,

\item[(iii)] $\cL  = \overline\cL $ and ${\cR} = \rann(\cL )$.
\end{itemize}
\end{lem}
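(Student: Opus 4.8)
The plan is to establish the cyclic chain of implications (i) $\Rightarrow$ (ii) $\Rightarrow$ (iii) $\Rightarrow$ (i). Each step is a direct substitution of the defining equalities of the current hypothesis into the closure formulas $\ov{\cR}=\rann(\lann(\cR))$ and $\ov{\cL}=\lann(\rann(\cL))$, combined with the idempotency relations (iii) and (iv) recorded in \ref{gc}. Note that the orthogonality condition $\cL\cR=0$ plays no active role in the equivalence itself; it only guarantees that we are dealing with a genuine orthogonal pair, so the whole argument reduces to a formal manipulation of the Galois connection.

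For (i) $\Rightarrow$ (ii), I would start from $\cL=\lann(\cR)$ and apply $\rann$ to both sides: then $\rann(\cL)=\rann(\lann(\cR))=\ov{\cR}=\cR$, where the last equality is the hypothesis $\cR=\ov{\cR}$. Together with the still-valid equality $\cL=\lann(\cR)$ this is exactly (ii). For (ii) $\Rightarrow$ (iii), I would start from $\cR=\rann(\cL)$ and apply $\lann$: then $\lann(\cR)=\lann(\rann(\cL))=\ov{\cL}$, and since $\cL=\lann(\cR)$ by hypothesis we obtain $\cL=\ov{\cL}$; the remaining requirement $\cR=\rann(\cL)$ is already part of (ii).

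For (iii) $\Rightarrow$ (i), the right ideal $\cR=\rann(\cL)$ is an annihilator right ideal, hence closed by the final remark of \ref{gc}, which gives $\cR=\ov{\cR}$; alternatively one computes $\ov{\cR}=\rann(\lann(\rann(\cL)))=\rann(\cL)=\cR$ using identity (iii) of \ref{gc}. For the second requirement of (i) one substitutes $\cR=\rann(\cL)$ to get $\lann(\cR)=\lann(\rann(\cL))=\ov{\cL}=\cL$, the last step invoking the hypothesis $\cL=\ov{\cL}$.

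I do not anticipate any serious obstacle here: the only care required is bookkeeping, namely tracking at each step whether it is the right-ideal closure identity (iv) or the left-ideal identity (iii) of \ref{gc} that applies, and applying the idempotency relations in the correct direction. The symmetry between $\cR$ and $\cL$ under the Galois connection is what makes the three conditions interchangeable: once any one of the two closures is known to be attained, the other follows automatically from the defining equalities of the orthogonal pair.
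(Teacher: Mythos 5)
Your proposal is correct and follows essentially the same route as the paper: both arguments are pure Galois-connection bookkeeping, applying $\rann$ or $\lann$ to the defining equalities and invoking the idempotency identities of \ref{gc}. The paper merely organizes it as (i)$\Leftrightarrow$(ii) and (ii)$\Leftrightarrow$(iii) (noting that under the shared hypothesis $\cL=\lann(\cR)$ one has $\rann(\cL)=\overline{\cR}$) rather than as your cyclic chain, which is an immaterial difference.
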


\begin{proof}
We will prove (i)$\Leftrightarrow$(ii). The proof of (ii)$\Leftrightarrow$(iii) is similar.
Suppose that  $\cL  = \lann(\cR )$. Then $\rann(\cL )=\rann(\lann(\cR )=\overline\cR$, 
so that $\cR  = \overline\cR \Leftrightarrow \cR  = \rann({\mathcal L})$.
\end{proof}

Given two orthogonal pairs
$(\cR_1 , \ \cL_1)$ and  $(\cR_2 , \ \cL_2)$, 
we say that $(\cR_1 , \ \cL_1)\subseteq (\cR_2 , \ \cL_2)$
if and only if $\cR_1\subseteq \cR_2$ and $\cL_1\subseteq \cL_2$.
This gives a partial order on the set of orthogonal pairs.

\begin{proposition} \label{maxorth2} Let  $(\cR , \ \cL )$ be an orthogonal pair
of $Q$. Then the following hold.

\begin{itemize}
\item[(i)] $(\overline\cR , \ \lann(\cR ))$ and
$(\rann(\cL ), \ \overline\cL )$ are maximal
orthogonal pairs.

\item[(ii)] $(\cR , \ \cL )$ is contained in the maximal 
orthogonal pairs $(\overline\cR , \ \lann(\cR ))$ and $(\rann(\cL ), \ \overline\cL )$.

\item[(iii)] $(\cR , \ \cL )$ is maximal if and
only if it satisfies the equivalent conditions of Lemma
\ref{maxorth1}.
\end{itemize}
\end{proposition}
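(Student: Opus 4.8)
The plan is to establish each of the three claims in Proposition \ref{maxorth2} by reducing everything to the closure operators and the equivalent conditions of Lemma \ref{maxorth1}. For part (i), I would first verify that the two stated pairs are genuinely orthogonal pairs, and then that they satisfy condition (i) (resp. (iii)) of the lemma, hence are maximal once maximality is shown to follow from those conditions. Concretely, for $(\overline\cR, \lann(\cR))$: since $\cR$ is nonzero and $\lann(\cR)\cR = 0$ by definition of the left annihilator, the pair $(\cR,\lann(\cR))$ is orthogonal; then by \ref{gc}(iv) we have $\lann(\overline\cR) = \lann(\cR)$, so $\lann(\overline\cR)\,\overline\cR = 0$ and $(\overline\cR,\lann(\cR))$ is an orthogonal pair with $\overline\cR = \overline{\overline\cR}$ (idempotency of the closure, which follows from (iii)--(iv)) and $\lann(\overline\cR) = \lann(\cR)$. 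This is exactly condition (i) of Lemma \ref{maxorth1} applied to the pair $(\overline\cR,\lann(\cR))$. The symmetric argument, using \ref{gc}(iii), handles $(\rann(\cL),\overline\cL)$ via condition (iii) of the lemma.

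The crux is part (iii), since (i) and (ii) will largely follow from it together with the closure machinery. The plan for (iii) is to argue both directions. For the forward direction, suppose $(\cR,\cL)$ is maximal; I must show it satisfies the lemma's conditions. Given any orthogonal pair, \ref{gc}(ii) gives $\cR\subseteq\overline\cR$ and, since $\cL\cR=0$ means $\cL\subseteq\lann(\cR)$, the pair $(\overline\cR,\lann(\cR))$ contains $(\cR,\cL)$ and is orthogonal by the computation above. By maximality, $\cR = \overline\cR$ and $\cL = \lann(\cR)$, which is condition (i) of Lemma \ref{maxorth1}. For the converse, suppose $(\cR,\cL)$ satisfies the lemma's conditions (equivalently, condition (i)) and let $(\cR',\cL')\supseteq(\cR,\cL)$ be an orthogonal pair. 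The key step is to show containment forces equality: from $\cL\subseteq\cL'$ and \ref{gc}(i) we get $\rann(\cL')\subseteq\rann(\cL) = \cR$ (using condition (ii) of the lemma, which is equivalent), while $\cR'\subseteq\rann(\cL')$ because $\cL'\cR'=0$; combining gives $\cR'\subseteq\cR$, hence $\cR'=\cR$. The symmetric argument on the other side gives $\cL'=\cL$, establishing maximality.

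With (iii) in hand, part (i) is immediate: the two pairs constructed in (i) satisfy the lemma's equivalent conditions, so by (iii) they are maximal. Part (ii) then follows because an arbitrary orthogonal pair $(\cR,\cL)$ is contained in $(\overline\cR,\lann(\cR))$ — indeed $\cR\subseteq\overline\cR$ by \ref{gc}(ii) and $\cL\subseteq\lann(\cR)$ since $\cL\cR=0$ — and the latter is a maximal orthogonal pair by (i).

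The step I expect to be the main obstacle is the containment-forces-equality argument in the converse direction of (iii), where one must deploy the Galois-connection inequalities \ref{gc}(i)--(iv) in exactly the right order and orientation to collapse the larger pair back onto $(\cR,\cL)$; the two sides are not quite symmetric in the hypotheses one starts from, so care is needed to invoke the correct one of the lemma's three equivalent formulations on each side. Everything else reduces to routine bookkeeping with the closure operators, whose idempotency and order-reversing behaviour are already recorded in \ref{gc}.
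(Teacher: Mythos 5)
Your proof is correct and follows essentially the same Galois-connection argument as the paper, differing only in organization: you prove both directions of (iii) first and deduce (i) and (ii) from it, whereas the paper establishes maximality of $(\overline\cR , \ \lann(\cR ))$ directly in (i) and reads off (ii) and (iii) as consequences. The only detail worth making explicit in your verification for (i) is that $\lann(\cR )\neq 0$, which follows from $0\neq\cL \subseteq\lann(\cR )$.
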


\begin{proof} (i) Since
$\cR\subseteq\overline\cR$ and $\cL
\subseteq \lann(\cR )$, both $\overline\cR $ and
$\lann(\cR )$ are nonzero; and since $\lann(\cR )
=\lann(\overline\cR )$, we have that $(\overline{\cR}, \ \lann(\cR ))$ is an orthogonal pair. Suppose now
that $(\overline\cR , \ \lann(\cR ))$ is contained
in an orthogonal pair $({\cR'} , \ {\cL'})$. Then
$\overline\cR  \subseteq {\cR'}$ implies $\lann({\cR'})
\subseteq \lann(\overline\cR ) = \lann({\cR})$,
so
$$
{\cL'} \subseteq \lann({\cR'}) \subseteq
\lann(\cR ) \subseteq {\cL'},
$$
 which proves that
$\lann(\cR ) = {\cL'}$. Hence
$${\cR'}
\subseteq \rann({\cL'}) = \overline\cR  \subseteq
{\cR'},
$$
 which proves that $\overline\cR  =
{\cR'}$. Therefore  the orthogonal pair
$(\overline\cR , \ \lann(\cR ))$ is maximal. Similarly, one can prove
 that $(\rann(\cL ), \ \overline\cL )$ is a maximal orthogonal pair.

(ii) 
As noted in the proof of (i), $(\cR , \ \cL )$ is
contained in the maximal orthogonal pair
$(\overline\cR , \ \lann(\cR ))$. Similarly,
$(\cR , \ \cL )$ is also contained in the maximal
orthogonal pair $(\rann(\cL ), \ \overline\cL )$.

(iii) Suppose that $(\cR , \ \cL )$ is maximal.
Then $(\cR , \ \cL ) \subseteq (\overline
\cR , \ \lann(\cR ))$ implies $\cR = \overline \cR $ and $\cL  = \lann(\cR )$.
\end{proof}

\begin{proposition} \label{trivinn1} Let $B$ be an additive subgroup of
$Q$. Then the following conditions are equivalent:

\begin{itemize}
\item[(i)] $BQB \subseteq B$ \ and  \ $B^2=0$.

\item[(ii)] There exist $\cL  \in {\cI}_l(Q)$ and $\cR  \in
{\cI}_r(Q)$ such that ${\cR}\cL  \subseteq B \subseteq \cR  \cap
{\cL}$ and $\cL \cR  =0$.
\end{itemize}
\end{proposition}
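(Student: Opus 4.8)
The plan is to prove the two implications separately. The implication (ii)$\Rightarrow$(i) is the routine one and follows purely formally from the two containments, whereas for (i)$\Rightarrow$(ii) I would exhibit explicit witnesses for $\cL$ and $\cR$, namely the one-sided ideals generated by $B$.

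For (ii)$\Rightarrow$(i), suppose $\cR\cL \subseteq B \subseteq \cR \cap \cL$ with $\cL\cR = 0$. Since $B \subseteq \cL$ and $B \subseteq \cR$, I would read $B^2$ as a product whose left factor lies in $\cL$ and whose right factor lies in $\cR$, giving $B^2 \subseteq \cL\cR = 0$. For the inner-ideal condition, using that $\cR$ is a right ideal I get $BQ \subseteq \cR Q \subseteq \cR$, hence $BQB \subseteq \cR B \subseteq \cR\cL \subseteq B$, where the middle inclusion uses $B \subseteq \cL$ and the last is the hypothesis. This settles (i).

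For (i)$\Rightarrow$(ii), assume $BQB \subseteq B$ and $B^2 = 0$. I would take $\cR := B + BQ$, the right ideal of $Q$ generated by $B$, and $\cL := B + QB$, the left ideal generated by $B$; each is genuinely a one-sided ideal (for instance $(B + BQ)Q = BQ + BQ^2 \subseteq BQ \subseteq \cR$), and both contain $B$ by construction, so $B \subseteq \cR \cap \cL$ is immediate. It then remains only to verify the two product conditions by expanding. For $\cL\cR = (B + QB)(B + BQ)$ each of the four resulting terms, namely $B^2$, $B^2 Q$, $QB^2$ and $QB^2 Q$, carries the factor $B^2 = 0$, so $\cL\cR = 0$. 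For $\cR\cL = (B + BQ)(B + QB)$ the four terms are $B^2 = 0$, $BQB$, $BQB$ and $BQ^2 B$; the first vanishes, the middle two lie in $B$ by hypothesis, and the last satisfies $BQ^2 B \subseteq BQB \subseteq B$ since $Q^2 \subseteq Q$. Hence $\cR\cL \subseteq B$, which completes (ii).

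I do not anticipate a genuine obstacle: the whole content is the choice of $\cL$ and $\cR$ as the generated one-sided ideals, after which the hypotheses $B^2 = 0$ and $BQB \subseteq B$ are precisely what is needed to annihilate $\cL\cR$ and to confine $\cR\cL$ inside $B$. The only points demanding a little care are that in the possibly non-unital ring $Q$ the generated ideals are $B + BQ$ and $B + QB$ rather than $BQ$ and $QB$ (so that $B$ is actually contained in them), and the bookkeeping showing that every mixed product collapses, via $B^2 = 0$ or $Q^2 \subseteq Q$, to a term already controlled by the hypotheses.
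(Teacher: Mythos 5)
Your proof is correct and follows essentially the same route as the paper: for (i)$\Rightarrow$(ii) the paper likewise takes $\cL = B + QB$ and $\cR = B + BQ$ (leaving the verification to the reader, which you carry out in full), and for (ii)$\Rightarrow$(i) it uses the same two containment chains $B^2 \subseteq \cL\cR = 0$ and $BQB \subseteq \cR Q \cL \subseteq \cR\cL \subseteq B$. No gaps.
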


\begin{proof}

(i) $\Rightarrow$ (ii): Taking $\cL  = B + QB$ and $\cR  = B +
BQ$, it is easily seen that (ii) holds.

(ii) $\Rightarrow$ (i): Clearly, $B^2 \subseteq {\cL}\cR  =0$ and
$BQB \subseteq \cR Q\cL \subseteq \cR \cL  \subseteq B$.
\end{proof}

\begin{pgraph}\label{regdef} Following \cite{BR},
we say that an additive subgroup $B$ of $Q$ is
a  {\em regular inner ideal} of $Q$ if it satisfies
the equivalent conditions
of the above proposition. 
In that case the orthogonal pair $(\cR,\cL)$ in (ii) is said to be 
{\em associated to} $B$. 
We note the following properties of regular inner ideals.

\begin{itemize}
\item[{\rm (i)}]  If $B$ is nonzero in Proposition \ref{trivinn1}, then both $\cR$ and $\cL$ are
nonzero and therefore $(\cR, \cL)$ is an orthogonal pair.

\item[{\rm (ii)}]  If $Q$ is a prime ring and $(\cR, \cL)$ is an
orthogonal pair, then any additive subgroup $B$ of $Q$ with
${\cR}\cL \subseteq B \subseteq \cR  \cap {\cL}$ is a {\em
nonzero} regular inner ideal, since $B=0$ would imply $\cR Q \cL
\subseteq \cR \cL \subseteq B=0$, which is a contradiction by
\ref{prime}(iii).

\item[{\rm (iii)}]  If $Q$ is a von Neumann regular ring, then any
orthogonal pair $(\cR, \cL)$ gives rise to a {\em unique} regular
inner ideal $B=\cR \cL = \cR \cap \cL$, since 
$$\cR \cap \cL  \subseteq 
(\cR \cap \cL)Q(\cR \cap \cL) \subseteq \cR \cL \subseteq \cR \cap
\cL .$$
\end{itemize}
\end{pgraph}

\section{Zero product subrings of prime rings with nonzero heart}

The following lemma shows that the orthogonal pair associated to a nonzero regular inner ideal of 
 a prime ring with nonzero heart (see \ref{hh}) is 
defined almost uniquely.

\begin{lem} \label{simplering1} 
Let $Q$ be a prime ring  
with nonzero heart $H=\heart(Q)$ and let 
$B$ be a nonzero regular inner ideal of
$Q$ with associated orthogonal pair $(\cR , \ {\cL})$. Then $BH
=\cR H$ and $HB = H\cL $. In particular, if $Q$ is simple and
unital, then $BQ =\cR$ and $QB = \cL $.
\end{lem}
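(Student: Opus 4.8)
The plan is to establish the two claimed identities $BH = \cR H$ and $HB = H\cL$ separately, and then to deduce the unital simple case by specializing $H$. Since $B \subseteq \cR \cap \cL$ by the definition of the associated orthogonal pair (Proposition \ref{trivinn1}), the inclusions $BH \subseteq \cR H$ and $HB \subseteq H\cL$ are immediate. The whole content therefore lies in the reverse inclusions, and this is precisely where the core $H$ must enter.

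First I would prove the auxiliary absorption identities $\cL H = H$ and $H\cR = H$. Since $H$ is a two-sided ideal we have $\cL H \subseteq QH \subseteq H$, and one checks directly that $\cL H$ is itself a two-sided ideal of $Q$: it is stable under left multiplication because $Q\cL \subseteq \cL$, and under right multiplication because $HQ \subseteq H$. It is also nonzero, for if $\cL H = 0$ then $\cL \subseteq \lann(H)$, which is $0$ by \ref{prime}(ii) since $H$ is a nonzero ideal, contradicting $\cL \neq 0$. But any nonzero ideal contains the core $H = \core(Q)$, so $H \subseteq \cL H \subseteq H$, giving $\cL H = H$. The identity $H\cR = H$ follows by the symmetric argument, using that $\rann(H) = 0$ in a prime ring.

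With these in hand the two main identities drop out in one line. For the right-hand one, $\cR H = \cR(\cL H) = (\cR\cL)H \subseteq BH$, where the first equality uses $\cL H = H$ and the inclusion uses $\cR\cL \subseteq B$; combined with $BH \subseteq \cR H$ this gives $BH = \cR H$. Symmetrically, $H\cL = (H\cR)\cL = H(\cR\cL) \subseteq HB \subseteq H\cL$, whence $HB = H\cL$.

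Finally, for the unital simple case I would note that a simple ring equals its core, so $H = Q$; since $\cR$ is a right ideal of a unital ring, $\cR Q = \cR$, and hence $BQ = BH = \cR H = \cR$, with $QB = \cL$ obtained in the same way. The main obstacle — indeed the only step that is not purely formal — is the absorption identity $\cL H = H$: the key insight is that multiplying a one-sided ideal by the minimal ideal $H$ collapses it onto $H$, and it is exactly the substitution $\cL H = H$ that allows the hypothesis $\cR\cL \subseteq B$ to be brought to bear.
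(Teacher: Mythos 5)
Your proof is correct and follows essentially the same route as the paper: both arguments rest on primeness plus minimality of $H=\core(Q)$ to obtain an absorption identity, and then substitute it into $\cR\cL\subseteq B\subseteq\cR\cap\cL$. The only (cosmetic) difference is that you establish $\cL H=H$ and $H\cR=H$ separately, whereas the paper proves the single identity $\cL H\cR=H$ and uses it for both inclusions.
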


\begin{proof} By \ref{prime}(iii), the ideal $\cL H\cR$ is
nonzero. Since $\cL H\cR  \subseteq H$ and $H$ is minimal, we have
that $\cL H\cR  =H$. Hence
$$
\cR H = \cR \cL H\cR \subseteq BH\cR  = BH \subseteq \cR H,
$$
which proves that $BH =\cR H$. Similarly, one proves that $HB =
H\cL $.
\end{proof}

\begin{lem} \label{simplering2} 
Let $Q$ be a prime ring  
with nonzero heart $H$ and 
let $(\cR_1, \ {\cL}_1)$ and $(\cR_2, \ \cL_2)$ be maximal
orthogonal pairs in $Q$. Then $\cR_1 \cap \cL_1
\subseteq \cR_2 \cap \cL_2$ implies $({\cR}_1, \ \cL_1)= (\cR_2, \ \cL_2)$.
\end{lem}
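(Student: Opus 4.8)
The plan is to reduce the two-sided hypothesis $\cR_1 \cap \cL_1 \subseteq \cR_2 \cap \cL_2$ to the separate inclusions $\cR_1 \subseteq \cR_2$ and $\cL_1 \subseteq \cL_2$, after which the maximality of $(\cR_1, \cL_1)$ will immediately force equality of the two pairs. Set $B_i := \cR_i \cap \cL_i$ for $i = 1, 2$. The first step is to observe that $B_i$ is a nonzero regular inner ideal whose associated orthogonal pair may be taken to be $(\cR_i, \cL_i)$ itself: indeed $\cR_i \cL_i \subseteq \cR_i \cap \cL_i = B_i$ because $\cR_i$ is a right and $\cL_i$ a left ideal, so that $\cR_i \cL_i \subseteq B_i \subseteq \cR_i \cap \cL_i$, which is exactly condition (ii) of Proposition \ref{trivinn1}; and $B_i \neq 0$ by \ref{regdef}(ii) since $Q$ is prime. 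This lets me invoke Lemma \ref{simplering1} to obtain $B_i H = \cR_i H$ and $H B_i = H \cL_i$.

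Next I would transport the hypothesis through the core. From $B_1 \subseteq B_2$ I get $\cR_1 H = B_1 H \subseteq B_2 H = \cR_2 H$ and, dually, $H \cL_1 = H B_1 \subseteq H B_2 = H \cL_2$. Since $(\cR_2, \cL_2)$ is maximal, Proposition \ref{maxorth2}(iii) together with Lemma \ref{maxorth1} gives $\cR_2 = \rann(\cL_2)$ and $\cL_2 = \lann(\cR_2)$, so in particular $\cL_2 \cR_2 = 0$.

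The crux is then a cancellation of $H$ using primeness. For the right-ideal inclusion I compute $\cL_2 \cR_1 H \subseteq \cL_2 \cR_2 H = 0$; since $\lann(H) = 0$ in a prime ring (the core $H$ being a nonzero ideal, by \ref{prime}(ii)), it follows that $\cL_2 \cR_1 = 0$, that is, $\cR_1 \subseteq \rann(\cL_2) = \cR_2$. Symmetrically, from $H \cL_1 \subseteq H \cL_2$ I obtain $H \cL_1 \cR_2 \subseteq H \cL_2 \cR_2 = 0$, and since $\rann(H) = 0$ as well (prime rings are semiprime, so $\rann(H) = \lann(H) = 0$), this yields $\cL_1 \cR_2 = 0$, hence $\cL_1 \subseteq \lann(\cR_2) = \cL_2$.

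Having both $\cR_1 \subseteq \cR_2$ and $\cL_1 \subseteq \cL_2$, i.e. $(\cR_1, \cL_1) \subseteq (\cR_2, \cL_2)$, the maximality of $(\cR_1, \cL_1)$ forces $(\cR_1, \cL_1) = (\cR_2, \cL_2)$, as desired. I expect the main obstacle to be the passage from the intersection inclusion $B_1 \subseteq B_2$ back to the one-sided data: the intersections $\cR_i \cap \cL_i$ are too coarse to compare the $\cR$'s and $\cL$'s directly, and the role of Lemma \ref{simplering1} combined with the annihilator cancellation $\lann(H) = \rann(H) = 0$ is precisely to recover that finer information.
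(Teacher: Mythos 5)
Your proof is correct and follows essentially the same route as the paper's: both transport the hypothesis $\cR_1\cap\cL_1\subseteq\cR_2\cap\cL_2$ through the core via Lemma \ref{simplering1} and then use primeness to cancel $H$, deduce $\cL_1\cR_2=\cL_2\cR_1=0$, and conclude from the annihilator characterization of maximal pairs together with maximality of $(\cR_1,\cL_1)$. The only cosmetic differences are that the paper works with the sum $\cL_1+\cL_2$ rather than $\cL_1$ alone, and performs the cancellation via minimality of $H$ (a nonzero $\cL\cR_2$ would force $H=H\cL\cR_2\subseteq\cL_2\cR_2=0$) instead of your equivalent appeal to $\lann(H)=\rann(H)=0$.
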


\begin{proof} Set  $\cL  := \cL_1 + \cL_2$ and
$B_j =\cR_j \cap \cL_j$, $j=1, 2$. By Lemma \ref{simplering1}
applied to $B_1$ and $B_2$, 
which are regular inner ideals by (\ref{regdef})(ii),
$$H\cL  = H\cL_1 + H\cL_2 = HB_1 + HB_2
\subseteq HB_2 = H\cL_2 \subseteq \cL_2.$$ We claim that $\cL
\cR_2 =0$. Otherwise, $H=H \cL \cR_2$ (as $\cL\cR_2$ is a
two-sided ideal and $H$ is minimal) and hence, by the formula
displayed above,
$$
H =H\cL \cR_2 \subseteq {\cL}_2\cR_2 =0,
$$
 which is a contradiction. Thus ${\cL}\cR_2 =0$ and hence
 $$
 \cL_1 \subseteq {\cL} \subseteq \lann(\cR_2) =\cL_2
 $$
by Proposition \ref{maxorth2}(iii).  Similarly,
$\cR_1 \subseteq \cR_2$. But then $({\cR}_1, \ \cL_1)= (\cR_2, \
\cL_2)$ by maximality of $({\cR}_1, \ \cL_1)$.
\end{proof}

Now we are ready to prove our main result, which describes 
maximal zero product subsets of prime rings with nonzero hearts (in particular, of simple rings). 

\begin{thm}\label{equiv}
Let $Q$ be a prime ring with nonzero heart containing nonzero nilpotent
elements and let $S$ be a subset of $Q$. 
Then the following are equivalent.
\begin{itemize}
\item[{\rm (i)}]  $S$ is a maximal zero product subset of $Q$.

\item[{\rm (ii)}]  $S$ is a maximal regular inner ideal of $Q$.

\item[{\rm (iii)}] $S=\cR  \cap \cL $, where $(\cR , \
\cL )$ is a maximal orthogonal pair, i.e. $\cR=\rann(\cL)$
and $\cL=\lann(\cR)$.
\end{itemize}
\end{thm}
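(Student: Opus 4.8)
The plan is to prove the two equivalences (i)$\Leftrightarrow$(ii) and (ii)$\Leftrightarrow$(iii) separately, with the passage between the inner-ideal picture and the orthogonal-pair picture carried by a single bridging observation: \emph{every nonzero regular inner ideal $B$ of $Q$ is contained in a set of the form $\cR \cap \cL$ with $(\cR, \cL)$ a maximal orthogonal pair}. Indeed, by Proposition \ref{trivinn1}(ii) there are $\cR_0 \in \cI_r(Q)$ and $\cL_0 \in \cI_l(Q)$ with $\cR_0 \cL_0 \subseteq B \subseteq \cR_0 \cap \cL_0$ and $\cL_0 \cR_0 = 0$; since $B \neq 0$, item \ref{regdef}(i) makes $(\cR_0, \cL_0)$ an orthogonal pair, which by Proposition \ref{maxorth2}(ii) is contained in a maximal one $(\cR, \cL)$, and then $B \subseteq \cR_0 \cap \cL_0 \subseteq \cR \cap \cL$.

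For (iii)$\Rightarrow$(ii): given a maximal orthogonal pair $(\cR, \cL)$, primeness together with \ref{regdef}(ii) shows that $S = \cR \cap \cL$ is a nonzero regular inner ideal. To see it is maximal, suppose $S \subseteq B'$ with $B'$ a regular inner ideal; applying the bridging observation to $B'$ produces a maximal orthogonal pair $(\cR', \cL')$ with $\cR \cap \cL = S \subseteq B' \subseteq \cR' \cap \cL'$, whereupon Lemma \ref{simplering2} forces $(\cR, \cL) = (\cR', \cL')$, hence $B' = \cR \cap \cL = S$. Conversely (ii)$\Rightarrow$(iii): a maximal regular inner ideal $S$ lies inside some $\cR \cap \cL$ by the bridging observation, and that set is itself a regular inner ideal, so maximality of $S$ gives $S = \cR \cap \cL$. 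This is exactly where the standing hypotheses enter, since Lemma \ref{simplering2} (and through it Lemma \ref{simplering1}) relies on minimality of the core $H = \core(Q)$; primeness with nonzero core is therefore essential.

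For (i)$\Rightarrow$(ii) the real point is to upgrade a bare maximal zero-product \emph{set} to a regular inner ideal. Let $S$ be a maximal zero product subset, so $S^2 = 0$. Using maximality I would first check that $S$ is an additive subgroup: for $a, b \in S$ the set $S \cup \{a - b\}$ still has vanishing square (all cross products $s(a-b)$, $(a-b)s$, $(a-b)^2$ collapse because $sa = sb = 0$ and $as = bs = 0$), so $a - b \in S$ and $0 \in S$; next, for $s, s' \in S$ and $q \in Q$ the set $S \cup \{sqs'\}$ again squares to zero (using $s'' s = 0 = s' s''$ for $s'' \in S$), so $sqs' \in S$, i.e.\ $SQS \subseteq S$. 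By Proposition \ref{trivinn1} this makes $S$ a regular inner ideal, and since every regular inner ideal is itself a zero product subset, maximality of $S$ among all zero product subsets immediately yields maximality among regular inner ideals. For (ii)$\Rightarrow$(i): a maximal regular inner ideal $S$ is a zero product subset, hence embeds in a maximal zero product subset $T$ by Zorn's Lemma; by the case (i)$\Rightarrow$(ii) already proved, $T$ is a maximal regular inner ideal, and $S \subseteq T$ with both maximal forces $S = T$.

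The main obstacle I anticipate is precisely this bootstrapping in (i)$\Rightarrow$(ii): a zero product subset a priori carries no additive or multiplicative closure whatsoever, and the entire argument hinges on verifying that adjoining $a - b$ or $sqs'$ preserves the zero-square property, so that maximality can be invoked to pull these elements back into $S$. Once $S$ is known to be a regular inner ideal, the remainder reduces to orthogonal-pair combinatorics already packaged in Proposition \ref{maxorth2} and Lemmas \ref{simplering1}--\ref{simplering2}; the only delicacies there are confirming that $\cR \cap \cL$ is genuinely nonzero (so that it qualifies as a regular inner ideal) and arranging the squeeze $S \subseteq B' \subseteq \cR' \cap \cL'$ so that Lemma \ref{simplering2} applies cleanly.
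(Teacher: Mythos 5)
Your proposal is correct and uses essentially the same machinery as the paper: Proposition \ref{trivinn1} together with \ref{regdef} and Proposition \ref{maxorth2} to pass between regular inner ideals and maximal orthogonal pairs, Lemma \ref{simplering2} for uniqueness, and the observation that adjoining differences and products $sqs'$ preserves the zero-square property (the paper phrases this as ``the span of $S$ is a zero product subset'' and sets $B=SQS+S$). The only difference is organizational --- you prove (i)$\Leftrightarrow$(ii) and (ii)$\Leftrightarrow$(iii) separately where the paper runs the cycle (i)$\Rightarrow$(ii)$\Rightarrow$(iii)$\Rightarrow$(i) --- which does not change the substance of the argument.
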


\begin{proof}
(i)$\Rightarrow$(ii): Suppose that $S$ is a maximal zero product
subset. Note that $S$ is non-zero as $Q$ contains nonzero nilpotent elements. 
Since span of $S$ is a zero product subset,
$S$ is actually an additive subgroup of $Q$. Put $B=SQS+S$. Then
$S\subseteq B$ and $B^2=0$. Since $S$ is maximal, one has $S=B$.
Therefore
$$
BQB=SQS\subseteq B,
$$
so $B=S$ is a regular inner ideal of $Q$. Clearly, $B$ is maximal as $S$ is maximal.

(ii)$\Rightarrow$(iii): Suppose that $S$ is a maximal regular inner ideal of $Q$. 
Note that $S$ is non-zero (otherwise, it is strictly contained in a non-zero maximal 
zero product subset $S'$ of $Q$, which is a regular inner ideal of $Q$ as (i)$\Rightarrow$(ii)). 
By Proposition \ref{trivinn1}, there is an
orthogonal pair $(\cR , \ \cL )$ such that $S\subseteq
\cR\cap\cL$. By Proposition \ref{maxorth2}(ii), one can assume that
the pair $(\cR , \ \cL )$ is maximal. By Proposition
\ref{trivinn1}, $\cR \cap\cL$ is a regular inner ideal of $Q$, so
$S=\cR \cap\cL$ as $S$ is maximal.

(iii)$\Rightarrow$(i):
Let  $S=\cR  \cap \cL $ where $(\cR , \
\cL )$ is a maximal orthogonal pair.
Then $S^2\subseteq \cL\cR=0$, so $S$ is a zero product subset.
Let $S'$ be a maximal zero product subset of $Q$ containing $S$.
By the implication (i)$\Rightarrow$(iii), already established, 
 $S'=\cR'  \cap \cL' $ where $(\cR' , \
\cL' )$ is a maximal orthogonal pair.
By Lemma \ref{simplering2}, $(\cR , \ \cL )=(\cR' , \ \cL' )$,
so $S=S'$, as desired.
\end{proof}

\begin{cor}\label{bijection}
Let $Q$ be as in Theorem \ref{equiv}. Then the map $\cR  \mapsto \cR  \cap \lann(\cR )$ (resp. $\cL  \mapsto \cL  \cap \rann(\cL )$) 
is a bijection from
the set of all
 proper nonzero annihilator right (resp. left) ideals of $Q$ onto the set of all maximal zero product subsets of $Q$.
\end{cor}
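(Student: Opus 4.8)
The plan is to read the bijection directly off Theorem \ref{equiv}, using Lemma \ref{maxorth1} to translate between ``maximal orthogonal pair'' and ``annihilator right ideal''. First I would check that the map is well defined. Let $\cR$ be a proper nonzero annihilator right ideal and set $\cL = \lann(\cR)$. Since $\cR$ is an annihilator right ideal it is closed, $\cR = \overline\cR$, by \ref{gc}; and $\cL = \lann(\cR)$ is nonzero, because $\cL = 0$ would give $\cR = \overline\cR = \rann(\lann(\cR)) = \rann(0) = Q$, contradicting properness. Hence $(\cR, \cL)$ is an orthogonal pair satisfying condition (i) of Lemma \ref{maxorth1}, so it is a maximal orthogonal pair by Proposition \ref{maxorth2}(iii), and the implication (iii)$\Rightarrow$(i) of Theorem \ref{equiv} shows that $\cR \cap \lann(\cR) = \cR \cap \cL$ is a maximal zero product subset. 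Thus the map sends the source set into the target set.

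For surjectivity I would take a maximal zero product subset $S$. By Theorem \ref{equiv} we have $S = \cR \cap \cL$ for a maximal orthogonal pair $(\cR, \cL)$ with $\cR = \rann(\cL)$ and $\cL = \lann(\cR)$. Then $\cR = \rann(\cL)$ is an annihilator right ideal, and it is nonzero because it forms part of an orthogonal pair. It is also proper: were $\cR = Q$, then $\cL Q = \cL \cR = 0$, forcing $aQa = 0$ for every $a \in \cL$ and hence $\cL = 0$ by semiprimeness of the prime ring $Q$, a contradiction. Therefore $\cR$ is a proper nonzero annihilator right ideal, and $\cR \cap \lann(\cR) = \cR \cap \cL = S$, so $S$ lies in the image.

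Finally, injectivity. Suppose $\cR_1, \cR_2$ are proper nonzero annihilator right ideals with $\cR_1 \cap \lann(\cR_1) = \cR_2 \cap \lann(\cR_2)$. Writing $\cL_i = \lann(\cR_i)$, each $(\cR_i, \cL_i)$ is a maximal orthogonal pair by the first paragraph, and the hypothesis gives $\cR_1 \cap \cL_1 \subseteq \cR_2 \cap \cL_2$. Lemma \ref{simplering2} then yields $(\cR_1, \cL_1) = (\cR_2, \cL_2)$, and in particular $\cR_1 = \cR_2$.

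I do not expect a serious obstacle here, since the substantive content has already been packaged into Theorem \ref{equiv} and Lemma \ref{simplering2}; the corollary is essentially a bookkeeping reformulation. The only points needing care are the two boundary verifications, namely that $\lann(\cR) \neq 0$ for a proper closed $\cR$ (so the orthogonal pair is genuine) and that $\cR \neq Q$ whenever $S = \cR \cap \cL$ arises from Theorem \ref{equiv}. Both rest on primeness (and on the standing assumption that $Q$ carries nonzero nilpotent elements, which is what makes the maximal zero product subsets, and hence the whole correspondence, nontrivial).
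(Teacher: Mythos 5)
Your proposal is correct and follows the paper's route exactly: the paper's own proof is the one-line ``This follows from Theorem \ref{equiv} and Lemma \ref{simplering2},'' and what you have written is precisely the bookkeeping that one line compresses (well-definedness via Lemma \ref{maxorth1} and Proposition \ref{maxorth2}(iii), surjectivity via Theorem \ref{equiv}, injectivity via Lemma \ref{simplering2}). The two boundary checks you flag --- $\lann(\cR)\neq 0$ for proper closed $\cR$, and $\cR\neq Q$ for a maximal orthogonal pair --- are handled correctly using primeness.
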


\begin{proof} 
Let $\cR$ be a proper nonzero annihilator right ideal of $Q$. 
Then  $\cR= \rann(S)$ for some non-empty subset $S$ of $Q$. 
Note that $S\ne \{0\}$ as $\cR\ne Q$. 
Since the left ideal $\lann(\cR )$ contains $S$, it is non-zero. 
By (\ref{gc})(v), $\cR$ is closed, so by Lemma \ref{maxorth2}(ii), 
$(\cR,\lann(\cR ))$ is a maximal orthogonal pair of $Q$. Therefore,
by Theorem \ref{equiv}, $\cR\cap\lann(\cR )$ is a maximal zero product subset of $Q$.
We have shown that the map makes sense. Now Lemma \ref{simplering2} shows that the map is
injective and Theorem \ref{equiv} proves that it is surjective too. 
\end{proof}

\begin{pgraph} Recall that $Q$ is a \emph{Baer ring} if every left
annihilator of any subset of $Q$ is generated (as a left ideal) by
an idempotent element. If $Q$ is unital then it is known that this
definition is left-right symmetric. Note that every simple
Artinian ring is a unital Baer ring.
\end{pgraph}

\begin{cor} \label{corequiv} Let $Q$ be a simple unital Baer ring with nonzero
nilpotent elements.
Then $S \subset Q$ is a maximal zero product subset if and only if
$S=eQ(1-e)$ where $e\ne0,1$ is a nontrivial idempotent of $Q$.
Moreover, if $e_1$ and $e_2$ are idempotents of $Q$ then 
$e_1Q(1-e_1)= e_2Q(1-e_2)$ if and only if $e_1e_2=e_2$ and $e_2e_1=e_1$ (equivalently, $e_1Q=e_2Q$).
\end{cor}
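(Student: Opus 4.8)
The plan is to reduce the statement to the bijection of Corollary \ref{bijection} together with the structure theory of the preceding section, and then to carry out a couple of explicit idempotent computations. Since $Q$ is simple and unital it is prime and coincides with its core $H=Q$; as $Q$ has nonzero nilpotent elements, both Theorem \ref{equiv} and Corollary \ref{bijection} are available throughout.

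For the first equivalence I would argue as follows. Suppose $S$ is a maximal zero product subset. By Theorem \ref{equiv}, $S=\cR\cap\cL$ for a maximal orthogonal pair $(\cR,\cL)$ with $\cR=\rann(\cL)$ and $\cL=\lann(\cR)$. In particular $\cR$ is a right annihilator, so since $Q$ is a unital Baer ring (and there the Baer condition is left--right symmetric) $\cR$ is generated by an idempotent, say $\cR=eQ$. The two computations that carry the argument are
$$\lann(eQ)=Q(1-e),\qquad eQ\cap Q(1-e)=eQ(1-e),$$
both following routinely from unitality and $e^2=e$: for the first, $ae=0\iff a=a(1-e)$; for the second, any $x$ in the intersection satisfies $ex=x=x(1-e)$. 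These give $\cL=\lann(\cR)=Q(1-e)$ and hence $S=eQ(1-e)$, with $e\neq0,1$ forced by $\cR\neq0$ and $\cL\neq0$. Conversely, given a nontrivial idempotent $e$, I would put $\cR=eQ$ and $\cL=Q(1-e)$ and check directly that $(\cR,\cL)$ is a maximal orthogonal pair: $\cL\cR=0$ since $(1-e)e=0$, while the symmetric identities $\lann(eQ)=Q(1-e)$ and $\rann(Q(1-e))=eQ$ yield $\cL=\lann(\cR)$ and $\cR=\rann(\cL)$. Then $S=\cR\cap\cL=eQ(1-e)$ is a maximal zero product subset by Theorem \ref{equiv}.

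For the ``moreover'' part I would invoke the bijection of Corollary \ref{bijection}. Writing $\cR_i=e_iQ$, the computations above identify each $\cR_i$ as a proper nonzero annihilator right ideal whose image under $\cR\mapsto\cR\cap\lann(\cR)$ is exactly $e_iQ(1-e_i)$. Injectivity of that map then shows that $e_1Q(1-e_1)=e_2Q(1-e_2)$ forces $e_1Q=e_2Q$. The converse is immediate: $e_1Q=e_2Q$ gives $\lann(e_1Q)=\lann(e_2Q)$, i.e. $Q(1-e_1)=Q(1-e_2)$, whence the intersection formula yields $e_1Q(1-e_1)=e_1Q\cap Q(1-e_1)=e_2Q\cap Q(1-e_2)=e_2Q(1-e_2)$.

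The computations are elementary, so there is no serious obstacle. The single point that genuinely requires care is the appeal to the left--right symmetry of the Baer condition in order to realise the right annihilator $\cR=\rann(\cL)$ as $eQ$ for an idempotent $e$; this is what ties the abstract maximal orthogonal pair to a concrete Peirce decomposition. A minor subtlety worth flagging is that the ``moreover'' equivalence is intended for nontrivial idempotents, since for trivial ones both $e_iQ(1-e_i)$ collapse to $0$ while the right ideals $e_iQ$ need not coincide.
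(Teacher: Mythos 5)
Your proof is correct and follows essentially the same route as the paper: Theorem \ref{equiv} together with Corollary \ref{bijection}, the Baer condition to write $\cR=eQ$, and the two computations $\lann(eQ)=Q(1-e)$ and $eQ\cap Q(1-e)=eQ(1-e)$. Your explicit verification of the converse direction and your remark that the ``moreover'' equivalence should be read for nontrivial idempotents are slightly more careful than the paper's terse write-up, but the underlying argument is identical.
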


\begin{proof}
By Theorem \ref{equiv} and Corollary \ref{bijection}, 
the  maximal zero product subsets of $Q$
are the intersections $\cR  \cap \lann(\cR )$ where $\cR$ runs
over all proper nonzero annihilator right ideals of $Q$. Since $Q$
is Baer, $\cR=eQ$ for some idempotent $e$. Then
$\lann(\cR) = \lann(eQ) =Q(1-e)$.
Indeed, one has $a \in \lann(eQ)$ if and only if
$ae =0$, or equivalently,
$a=a(1-e) \in Q(1-e)$.
It is
also clear that
$$\cR  \cap \lann(\cR ) = eQ \cap Q(1-e) =eQ(1-e).$$
Finally, by Corollary \ref{bijection}, $e_1Q(1-e_1)= e_2Q(1-e_2)$
if and only if 
$e_1Q =e_2Q$. It is easy to see that the latter condition is equivalent
to $e_1e_2=e_2$ and $e_2e_1=e_1$. 
\end{proof}

\begin{pgraph} \label{onesided} By \cite[IV.8]{J}, a ring $Q$
 is simple with minimal one-sided ideals if and only $Q\cong Y \otimes_\Delta X$, where
$(X, Y, \langle \cdot , \cdot \rangle)$ is a pair of dual vectors
spaces over a division ring $\Delta$, and where the product is
given by
$$(y_1 \otimes x_1)(y_2 \otimes x_2) = y_1 \otimes \langle x_1, y_2\rangle
x_2$$ for all $x_1, x_2 \in X$, $y_1, y_2 \in Y$. According to
this representation of $Q$, we have (see \cite[IV.16.Theorem
1]{J}:
\begin{itemize}
\item[{\rm (i)}] The map $W \mapsto W \otimes X$ is a lattice
isomorphism of the lattice ${\mathcal S}(Y)$ of all subspaces of
$Y$ onto the lattice ${\cI}_r(Q)$ of all right ideals of $Q$.

\item[{\rm (ii)}] The map $V \mapsto Y \otimes V$ is a lattice
isomorphism of the lattice ${\mathcal S}(X)$ of all subspaces of
$X$ onto the lattice ${\cI}_l(Q)$ of all left ideals of $Q$.
\end{itemize}
\end{pgraph}

\begin{pgraph} \label{annih}
It is easy to check that if $\cR =W \otimes X$ is a right ideal of
$Q$, then $\lann(\cR) = Y \otimes W^\perp$, where $W^\perp = \{ x
\in X : \langle x, W \rangle =0\}$. Similarly, for any left ideal
$\cL =Y \otimes V$ of $Q$, $\rann(Y \otimes V) = V^\perp \otimes
X$. Thus $\cR$ is an annihilator right ideal if and only $\cR = W
\otimes Y$, where $W$ is a {\em closed} subspace of $Y$, i.e.,
$W^{\perp \perp} =W$. 

We have a Galois connection between the
lattice ${\mathcal S}(X)$ of all subspaces of $X$ and the lattice
${\mathcal S}(Y)$ of all subspaces of $Y$ given by $V \to V^\perp$
and $W  \to W^\perp$. 

It is easy to see that all finite dimensional subspaces of $X$ (resp. $Y$) are closed. 
Indeed, let $V$ be a finite-dimensional subspace of $X$ with basis  $\{ x_1, \ \dots, \ x_n\}$. 
Fix a dual set $\{ y_1, \ \dots, \ y_n\}$ of vectors of $Y$ such that 
$\langle x_i, y_j\rangle = \delta_{ij}$, $1\le i,j\le n$, and denote 
$W=\text{span}\{ y_1, \ \dots, \ y_n\}$.  
Then we have $X= V \oplus W^\perp$ and $Y=W \oplus V^\perp$. 
Clearly, $V \subseteq V^{\perp\perp}$. 
Thus, $V \ne V^{\perp\perp}$ if and only if there exists a non-zero $u\in V^{\perp\perp}\cap W^\perp$.
But we have 
$\langle u, Y\rangle \subseteq \langle u, W\rangle + \langle u, V^\perp \rangle =0$, 
which implies, by nondegeneracy, that $u=0$. 
\end{pgraph}

\begin{cor} \label{reginner}
Let $Q=Y\otimes_\Delta X$ be a simple ring with minimal one-sided
ideals. Then the map $W \mapsto W\otimes W^\perp$ is a bijection
from the set of nonzero proper closed subspaces of $Y$ onto the
set of maximal zero product subsets of $Q$.
\end{cor}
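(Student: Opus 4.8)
The plan is to realize the asserted map as the composite of two bijections already in hand: the abstract classification of maximal zero product subsets in Corollary \ref{bijection}, and the lattice dictionary between subspaces and one-sided ideals supplied by \ref{onesided} and \ref{annih}. The only genuinely new work is a coordinate computation of an intersection.

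First I would pin down the domain. By \ref{onesided}(i) the map $W \mapsto W \otimes X$ is a lattice isomorphism from the subspaces of $Y$ onto the right ideals of $Q$, and by \ref{annih} (combined with \ref{gc}) a right ideal $\cR = W \otimes X$ is an annihilator right ideal precisely when $W$ is closed, i.e. $W^{\perp\perp} = W$. Since a lattice isomorphism carries $0$ to $0$ and $Y$ to $Q$, it restricts to a bijection from the nonzero proper closed subspaces of $Y$ onto the proper nonzero annihilator right ideals of $Q$. At this point I would also record that the standing hypothesis of Corollary \ref{bijection} — existence of nonzero nilpotent elements — is automatic here once $Q$ is not a division ring (equivalently $\dim_\Delta X \ge 2$, equivalently there exist nonzero proper closed subspaces at all): an element $y \otimes x$ with $\langle x, y\rangle = 0$ and $y \otimes x \neq 0$ squares to zero, and such elements exist by nondegeneracy of the pairing.

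Next I would apply Corollary \ref{bijection}: the map $\cR \mapsto \cR \cap \lann(\cR)$ is a bijection from the proper nonzero annihilator right ideals onto the maximal zero product subsets. Composing with $W \mapsto W \otimes X$ and using $\lann(W \otimes X) = Y \otimes W^\perp$ from \ref{annih}, the composite sends $W$ to
$$(W \otimes X) \cap (Y \otimes W^\perp).$$
Thus everything reduces to the identity $(W \otimes X) \cap (Y \otimes W^\perp) = W \otimes W^\perp$; granting it, $W \mapsto W \otimes W^\perp$ is a composite of two bijections and hence itself a bijection, as claimed.

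The main obstacle is exactly this last identity, which is the one computational point. One inclusion is immediate, since $W \otimes W^\perp$ sits inside both $W \otimes X$ and $Y \otimes W^\perp$. For the reverse inclusion I would argue with coordinates: choosing a $\Delta$-basis $\{y_i\}$ of the right $\Delta$-space $Y$ that extends a basis of $W$, one has $Y \otimes_\Delta X \cong \bigoplus_i X$, so every $t \in Y \otimes_\Delta X$ has a \emph{unique} expression $t = \sum_i y_i \otimes v_i$ with $v_i \in X$. Under this description $W \otimes X$ consists of the $t$ supported on the basis indices of $W$, while $Y \otimes W^\perp$ consists of the $t$ with every $v_i \in W^\perp$; their intersection is precisely the $t$ supported on $W$ with all $v_i \in W^\perp$, which is $W \otimes W^\perp$. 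Alternatively, since $Q$ coincides with its socle it is von Neumann regular, so by \ref{regdef}(iii) the intersection equals the product $(W \otimes X)(Y \otimes W^\perp)$, and the multiplication rule $(w \otimes x)(y \otimes v) = w \otimes \langle x, y\rangle v$ together with nondegeneracy of the pairing yields $W \otimes W^\perp$ directly. Either route completes the proof.
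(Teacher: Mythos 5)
Your proposal is correct and follows essentially the same route as the paper: compose the bijection of Corollary \ref{bijection} with the lattice dictionary of \ref{onesided} and \ref{annih}, and reduce to the identity $(W\otimes X)\cap(Y\otimes W^\perp)=W\otimes W^\perp$, which the paper simply asserts and you justify in more detail. Your extra remarks (handling the nonzero-nilpotent hypothesis and the explicit coordinate computation of the intersection) are sound additions but do not change the argument.
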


\begin{proof} By Corollary \ref{corequiv}, any maximal zero product subset
$S$ of $Q$ is of the form $S={\cR} \cap \lann({\cR})$ for a unique
proper nonzero annihilator right ideal $\cR$ of $Q$. Now it
follows from (\ref{onesided}) and (\ref{annih}) that ${\cR} = W
\otimes X$ for a unique nonzero proper closed subspace $W$ of $Y$
and  $\lann(W \otimes X) = Y \otimes W^\perp$.  Hence
$$
S={\cR}
\cap \lann({\cR}) = (W \otimes X) \cap (Y \otimes W^\perp) = W
\otimes W^\perp
$$
as required.
\end{proof}

 We finish with an application to the
Lie inner ideal structure of simple rings.

\begin{pgraph} Recall that every associative ring $Q$ becomes a Lie
ring $Q^{(-)}$ under the product $[x,y]=xy-yx$.
An additive subgroup $B$ of a Lie ring $L$ is called an {\em inner ideal} if $[[B, L], B] \subseteq B$. An inner ideal $B$
is said to be {\em abelian} if $[B, B] =0$. An inner ideal $B$ of $Q^{(-)}$
is said to be {\em Jordan-Lie} if $B^2=0$, see \cite{F,BS}. 
Inner ideals were first systematically studied by Benkart \cite{B}, see also
\cite{BS,BF,BFG,Fbook} for some recent development. 
\end{pgraph}

\begin{cor} Let $Q$ be a simple associative ring. 
For  an additive subgroup $B$ of
$Q$ the following conditions are equivalent.

\begin{itemize}
\item[{\rm (i)}] $B$ is a maximal zero product subset of $Q$.

\item[{\rm (ii)}] $B$ is a maximal regular inner ideal of $Q$.

\item[{\rm (iii)}] $B$ is a maximal Jordan-Lie inner ideal of
$Q^{(-)}$.
\end{itemize}

Moreover, if in addition, $Q$ is not unital (i.e. doesn't contain an identity element) 
and ${\rm char}(Q)\ne 2,3$ then 
the conditions (i)-(iii) are equivalent to 

\begin{itemize}
\item[{\rm (iv)}] $B$ is a maximal abelian inner ideal of
$Q^{(-)}$.
\end{itemize}

\end{cor}

\begin{proof} (i) $\Leftrightarrow$ (ii). This is proved in Theorem
\ref{equiv}.

(ii) $\Leftrightarrow$ (iii). Suppose $B$ is a  maximal regular inner ideal
of $Q$. Then by definition, $B^2=0$, so $B$ is Jordan-Lie. 
Note that any Jordan-Lie inner ideal of $Q$ is contained in a maximal zero product subset of $Q$, 
which is a maximal regular inner ideal by Theorem  \ref{equiv}. 
Thus, $B$ is maximal as Jordan-Lie.  Conversely, if $B$ is a  maximal Jordan-Lie inner ideal
of $Q$, then it must be maximal regular by above. 

Suppose now that  $Q$ is not unital. 

(iii) $\Leftrightarrow$ (iv). 
If $B$ is a  Jordan-Lie inner ideal
of $Q$, then $[B, B] \subseteq B^2 =0$, so $B$ is abelian. 
On the other hand, by \cite[Theorem 5.4]{F} (applied to the
case of a non-unital simple ring),  every abelian inner
ideal of $Q$ is Jordan-Lie. 
\end{proof}

\end{document}